\newtheorem{theorem}{Theorem}
\newtheorem{lemma}[theorem]{Lemma}
\newcommand{\R}{\mathbb{R}}
\newcommand{\Z}{\mathbb{Z}}
\renewcommand{\S}{\mathcal{S}}
\numberwithin{equation}{section}
\begin{document}

\title{An $\mathrm{L}^4$ estimate for a singular entangled quadrilinear form}
\author{Polona Durcik}
\address{Polona Durcik, Universit\"at Bonn, Endenicher Allee 60, 53115 Bonn, Germany}
\email{durcik@math.uni-bonn.de}
\date{\today}

\subjclass[2010]{Primary 42B15; Secondary 42B20.}

\begin{abstract} 
The twisted paraproduct can be viewed as a two-dimensional trilinear form which appeared in the work by Demeter and Thiele on the two-dimensional bilinear Hilbert transform. 
$\mathrm{L}^p$ boundedness of the twisted paraproduct is due to Kova\v{c}, who in parallel established estimates for the dyadic model of a closely related quadrilinear form. 
We prove an  $(\mathrm{L}^4, \mathrm{L}^4, \mathrm{L}^4, \mathrm{L}^4)$ bound for the continuous model of the latter by adapting the 
technique of Kova\v{c} to the continuous setting. 
The mentioned forms belong to a larger class of operators with general modulation invariance. 
Another  instance of such is the triangular Hilbert transform, which controls issues related to two commuting transformations in ergodic theory, and for which $\mathrm{L}^p$ bounds remain an open problem. 
\end{abstract}

\maketitle


\vspace{-0.35cm}
\section{Introduction}
For four functions $F_1,F_2,F_3,F_4$ on $\R^2$ we denote their "entangled product"
\begin{align}\label{f}
\boldsymbol{F}_{(F_1,F_2,F_3,F_4)}(x,x',y,y'):=
F_1(x,y)F_2(x',y)F_3(x',y')F_4(x,y').
\end{align}
Let $m$ be a bounded  function on $\R^2$, smooth away from the origin and  
satisfying\footnote{For two non-negative quantities $A$ and $B$ we write $A\lesssim B$ if there is an absolute constant $C>0$ such that $A\leq C B$. 
We write $A\lesssim_P B$ if the constant depends on a set of parameters $P$.}
\begin{align}\label{symest}
 |\partial^\alpha m(\xi,\eta)|\lesssim (|\xi|+|\eta|)^{-|\alpha|}
\end{align} for all multi-indices $\alpha$ up to some large finite order. 
With any such $m$ we associate a quadrilinear form $\Lambda=\Lambda_m$ defined as\footnote{The Fourier transform we use is defined in \eqref{ft-def}.} 
$$\Lambda(F_1,F_2,F_3,F_4):=  \int_{\R^2}  \widehat{\boldsymbol{F}}(\xi,-\xi,\eta,-\eta)m(\xi,\eta) d\xi d\eta$$
for Schwartz functions $F_j\in \S(\R^2)$,   where $\boldsymbol{F}:=\boldsymbol{F}_{(F_1,F_2,F_3,F_4)}$. The object of this   paper is to establish the following bound.
\begin{theorem} \label{mainthm}
The quadrilinear form $\Lambda$ satisfies the estimate
\begin{align}\label{mainest}
|\Lambda(F_1,F_2,F_3,F_4)| \lesssim 
\| F_1 \|_{\mathrm{L}^4(\R^2)}\| F_2 \|_{\mathrm{L}^4(\R^2)}\| F_3 \|_{\mathrm{L}^4(\R^2)}\| F_4 \|_{\mathrm{L}^4(\R^2)}.
\end{align}
\end{theorem}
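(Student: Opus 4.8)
The plan is to pass to physical space and then run a continuous version of Kova\v{c}'s telescoping/Bellman–function argument.

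\smallskip
\noindent\textbf{Reduction to single scales.} First I would write $\Lambda$ as a genuine singular integral: unfolding the definitions,
\[
\Lambda(F_1,F_2,F_3,F_4)=c\int_{\R^4}K(x-x',y-y')\,\boldsymbol{F}(x,x',y,y')\,dx\,dx'\,dy\,dy',
\]
where $K=\check m$ is a two-dimensional Calder\'on--Zygmund kernel and $\boldsymbol F=\boldsymbol F_{(F_1,F_2,F_3,F_4)}$. Via a Calder\'on reproducing formula one splits $m(\xi,\eta)=\int_0^\infty m(\xi,\eta)\,\widehat\theta(t\xi,t\eta)\,\frac{dt}{t}$ with $\widehat\theta$ a bump supported in an annulus, so that $\Lambda=\int_0^\infty\Lambda_t\,\frac{dt}{t}$ with $\widehat{K_t}$ supported in $\{|(\xi,\eta)|\sim t^{-1}\}$. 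After an angular partition of unity (separating the regions $|\eta|\lesssim|\xi|$, $|\xi|\sim|\eta|$, $|\xi|\lesssim|\eta|$) and a rapidly convergent Fourier expansion of each rescaled single-scale symbol on a box, it suffices to treat forms whose kernel is a tensor product $\varphi_t\otimes\phi_t$, $\varphi_t(s)=t^{-1}\varphi(s/t)$, $\phi_t(u)=t^{-1}\phi(u/t)$, with at least one of $\varphi,\phi$ having vanishing integral. Crucially, since the scalewise integral $\int_0^\infty|\Lambda_t|\,\frac{dt}{t}$ need not converge, all of this must be kept as an \emph{exact} identity, with absolute values reserved for the very end.

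\smallskip
\noindent\textbf{The telescoping engine.} The heart of the matter is an auxiliary quantity built from the heat semigroup. Let $\mathcal G_T$ be the two-dimensional heat semigroup and set
\[
\Theta(T):=\int_{\R^4}(\mathcal G_TF_1)(x,y)\,(\mathcal G_TF_2)(x',y)\,(\mathcal G_TF_3)(x',y')\,(\mathcal G_TF_4)(x,y')\,dx\,dx'\,dy\,dy'.
\]
Then $|\Theta(0^+)|=\big|\int_{\R^2}F_1F_2F_3F_4\big|\le\prod_{j=1}^4\|F_j\|_{\mathrm{L}^4(\R^2)}$ by H\"older, and $\Theta(\infty)=0$. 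Differentiating in $T$ using $\partial_T\mathcal G_T=\Delta\mathcal G_T$ and integrating by parts in each spatial variable, the entangled structure forces the derivatives to pair up along the \emph{shared} variables, and one obtains
\[
-\,\partial_T\Theta(T)=2\big(\Theta^x_{14}+\Theta^y_{12}+\Theta^{x'}_{23}+\Theta^{y'}_{34}\big)(T),
\]
where for instance $\Theta^x_{14}(T)=\int_{\R^4}(\partial_1\mathcal G_TF_1)(x,y)(\mathcal G_TF_2)(x',y)(\mathcal G_TF_3)(x',y')(\partial_1\mathcal G_TF_4)(x,y')\,dx\,dx'\,dy\,dy'$ is again an entangled quadrilinear expression, now carrying a one-dimensional (``$\psi$-type'') derivative on the two functions meeting at the shared $x$-variable. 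Integrating $-\partial_T\Theta$ over $T\in(0,\infty)$ thus identifies a signed combination of such half-smoothed entangled forms with the trivially controlled quantity $\Theta(0^+)$. To upgrade this into a bound on the single-scale forms themselves one replaces $\Theta$ by $\int_{\R^2}B(\mathcal G_TF_1,\dots,\mathcal G_TF_4,\dots)$ for a Bellman-type function $B$ adapted to the exponent $4$ and to the off-diagonal pairings, chosen so that $-\partial_T$ of this functional dominates $|\Lambda_t|$ at the matching scale while the functional stays $\lesssim\prod_j\|F_j\|_{\mathrm{L}^4}$; this is the continuous substitute for the martingale orthogonality available in Kova\v{c}'s dyadic model, and, when a mean-one factor is present, it is combined with a heat-semigroup telescoping that trades that factor for an integral over a larger scale, producing two-scale forms to which the same scheme applies.

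\smallskip
\noindent\textbf{Error terms and square functions.} The pieces not captured by a clean differential inequality — in particular the fully cancellative ones, where both $\varphi$ and $\phi$ are mean zero — I would estimate directly. Integrating out $y,y'$ writes such a form as $\int_{\R^2}\varphi_t(x-x')\,\langle\phi_t*(F_1(x,\cdot)F_2(x',\cdot)),\,F_3(x',\cdot)F_4(x,\cdot)\rangle_{\mathrm{L}^2}\,dx\,dx'$; a Cauchy--Schwarz that respects the pairing $(F_1,F_4)$ against $(F_2,F_3)$, followed by Cauchy--Schwarz in the scale $t$, reduces matters to one-dimensional Littlewood--Paley square function estimates $\int_0^\infty\|\phi_t*g\|_{\mathrm{L}^2}^2\,\frac{dt}{t}\lesssim\|g\|_{\mathrm{L}^2}^2$ applied to products $g=F_iF_j$, together with $\|F_iF_j\|_{\mathrm{L}^2}\le\|F_i\|_{\mathrm{L}^4}\|F_j\|_{\mathrm{L}^4}$. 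Choosing the pairing compatibly with the four-cycle of $\boldsymbol F$ is exactly what makes this work, and is the precise sense in which the entanglement is benign at a single scale.

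\smallskip
\noindent\textbf{Main obstacle.} I expect the crux to be the continuous Bellman/telescoping step: identifying the right auxiliary functional for $p=4$, verifying the differential inequality via the heat equation and the integration-by-parts pairing above with all constants uniform over the finitely many angular pieces and the Fourier modes, and controlling the error terms without ever reverting to absolute values before the very end, where the barely integrable measure $\frac{dt}{t}$ (or $\frac{ds\,dt}{s\,t}$ in the two-scale pieces) is finally tamed by cancellation. This bookkeeping — rather than any single inequality — is the real content of transplanting Kova\v{c}'s dyadic proof to the continuum.
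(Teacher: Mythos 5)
Your telescoping functional is structurally mismatched with $\Lambda$, and its asserted endpoint value is wrong. Expanding each $\mathcal G_TF_j$ as a convolution against the heat kernel $H_T$ and integrating out $x,x',y,y'$ gives
\begin{equation*}
\Theta(T)=\int_{\R^8}F_1(a_1,b_1)F_2(a_2,b_2)F_3(a_3,b_3)F_4(a_4,b_4)\,H_{2T}(a_1{-}a_4)H_{2T}(a_2{-}a_3)H_{2T}(b_1{-}b_2)H_{2T}(b_3{-}b_4)\,\prod_i da_i\,db_i,
\end{equation*}
so $\Theta(0^+)=\int_{\R^4}\boldsymbol F(x,x',y,y')\,dx\,dx'\,dy\,dy'$, which is not $\int_{\R^2}F_1F_2F_3F_4$; moreover this de-entangled integral is naturally controlled by $\prod_j\|F_j\|_{\mathrm L^2}$ via iterated Cauchy--Schwarz, not by $\prod_j\|F_j\|_{\mathrm L^4}$. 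The root of the mismatch is that your $\Theta$ places its regularizing kernels between the pairs of variables that are \emph{identified} in the entangled product (the first slot of $F_1$ with that of $F_4$, the second slot of $F_1$ with that of $F_2$, and so on), i.e.\ it dissolves the entanglement, whereas $\Lambda$'s kernel $\kappa(x'-x,y'-y)$ lives on the cross differences while all identifications remain exact. Consequently the forms $\Theta^x_{14},\Theta^y_{12},\dots$ produced by differentiating in $T$ bear no structural relation to the single-scale pieces of $\Lambda$, and the Bellman-type functional $B$ that would be needed to convert any telescoping into an $L^4$ bound is invoked but never constructed.

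The paper keeps the entanglement strict throughout and telescopes in the dilation parameter. After the cone decomposition, each single-scale piece has kernel $[\phi_1]_t(p-x)[\phi_2]_t(p-x')[\phi_3]_t(q-y)[\phi_4]_t(q-y')$ with $\phi_1,\phi_2$ of $\varphi$-type (no cancellation) and $\phi_3,\phi_4$ of $\psi$-type (mean zero); the delicate point, handled in Lemma~\ref{lemma:bump}, is to arrange that the Fourier-side square roots needed to split the symbol as $(\widehat{\varphi^{(u)}})^2(\widehat{\psi^{(v)}})^2$ remain in $C_0^\infty$. The argument then alternates: a Cauchy--Schwarz in the $y,y'$ integrations (keeping the mean-zero factors grouped), a pointwise domination of the resulting $|\varphi|$-profile by a superposition of Gaussians, the telescoping identity of Lemma~\ref{tel} driven by $-t\partial_t|\widehat\rho(t\tau)|^2=|\widehat\sigma(t\tau)|^2$ with $(\rho,\sigma)=(g_\alpha,\alpha g_\alpha')$, a second Cauchy--Schwarz now in $x,x'$, another Gaussian domination and telescoping, and a final positivity observation once all four slots carry the same function. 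You correctly guess that Gaussians and a continuous telescoping replace the dyadic martingale structure, and your single-scale square-function remark is reasonable in isolation, but the two Cauchy--Schwarz decoupling steps --- which are precisely what converts the quadrilinear $L^4$ problem into a chain of nonnegative telescopable forms --- are absent, and the telescoping itself is applied to the wrong object.
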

When $m$ is identically one,  $\Lambda$ corresponds to the pointwise product form
\begin{align*}
\Lambda(F_1,F_2,F_3,F_4) = 
 \int_{\R^2} F_1(x,y)F_2(x,y)F_3(x,y)F_4(x,y)dxdy.
\end{align*}
The bound  \eqref{mainest} is  then an immediate consequence of H\"older's inequality and holds in a larger range of exponents. In general,  we can formally write   $\Lambda(F_1,F_2,F_3,F_4)$ as
\begin{align}\label{lambda-kernel}
 \int_{\R^4} F_1(x,y)F_2(x',y)F_3(x',y')F_4(x,y')\kappa(x'-x,y'-y)dxdx'dydy',
\end{align}
where $\kappa$ is a two-dimensional Calder\'on-Zygmund kernel. 

The motivation for these objects originates in the  study of the {twisted paraproduct} \cite{kovac:tp}. We call the {\em twisted paraproduct} a trilinear form $T=T_m$ defined as
$$T(F_1,F_2,F_3):=\Lambda(F_1,F_2,F_3,1).$$
That is, the fourth function in the entangled product $\boldsymbol{F}$ is  the constant function one. 
The form $T$ was proposed by Demeter and Thiele \cite{demTh:BHT}  as the dual of a particular case of the two-dimensional  bilinear Hilbert transform. 
This was the only  case which could not be treated with the  time-frequency techniques in \cite{demTh:BHT}. 
Lack of applicability of the latter is closely related with general modulation symmetries that the operators $T$ and $\Lambda$ exhibit. 
An example of such a symmetry is that for any $g\in \mathrm{L}^\infty(\R)$ we have  invariance
$$\Lambda((1\otimes g) F_1, F_2,F_3,F_4)=\Lambda(F_1,(1\otimes g)F_2,F_3,F_4),$$
where $(f\otimes g)(x,y):=f(x)g(y)$. This is evident from their entangled structure. One can informally say that the generalized modulation invariance is present since
 several functions depend on the same one-dimensional variable. 

First bounds for $T$ are due to Kova\v{c} \cite{kovac:tp}, who established
\begin{align}\label{boundTP}
|T(F_1,F_2,F_3)|\lesssim_{(p_j)} \|F_1\|_{\mathrm{L}^{p_1}(\R^2)}\|F_2\|_{\mathrm{L}^{p_2}(\R^2)}\|F_3\|_{\mathrm{L}^{p_3}(\R^2)}
\end{align}
whenever 
${1}/{p_1} + {1}/{p_2} + {1}/{p_3} =1$ and $2<p_1,p_2,p_3<\infty$. His approach relied on the Bellman function technique. The fiber-wise  Calder\'on-Zygmund decomposition of Bernicot \cite{bernicot:fw}
extended the range of exponents to $1<p_1,p_3 < \infty$,  $2<p_2\leq \infty$.

Kova\v{c} observed that adding the fourth function $F_4$ to $T$ completes the cyclic structure of the form and results in an object with a high degree of symmetry. 
For instance, for even kernels $\kappa$ one has $\Lambda(F_1,F_2,F_3,F_4)=\Lambda(F_3,F_4,F_1,F_2)$.
Moreover, $T$ and $\Lambda$ can be seen as the smallest non-trivial examples of a  family of entangled multilinear forms associated with   bipartite graphs, whose dyadic models were studied in \cite{kovac:bf}.

To prove \eqref{boundTP}, Kova\v{c} passed through a dyadic version of $\Lambda$, which we call $\Lambda_\mathrm{d}$.  
He considered \eqref{lambda-kernel} with $\kappa$ replaced by the perfect (dyadic) Calder\'on-Zygmund kernel 
\begin{align}\label{kernel:dyad}
\sum_{I\times J} \varphi^\mathrm{d}_I(x)\varphi^\mathrm{d}_I(x')\psi^\mathrm{d}_J(y)\psi^\mathrm{d}_J(y').
\end{align}
The sum in \eqref{kernel:dyad} runs over all dyadic squares\footnote{A dyadic square 
is a product of two dyadic intervals of the same length. A dyadic interval is an interval of the form $[2^km,2^k(m+1)),\, k,\,m \in \Z$.} 
$I\times J$ in $\R^2$. For a dyadic interval $I$, 
the scaling function and the Haar function are   defined as\footnote{We write $\mathbf{1}_A$ for the characteristic function of a set $A\subseteq \R$.}
\begin{align*}
\varphi^\mathrm{d}_{I}:=|I|^{-1/2}\mathbf{1}_I\;\; \mathrm{and}\;\;
\psi^\mathrm{d}_{I}:=|I|^{-1/2}\left ( \mathbf{1}_{I_\mathrm{left\, half}}-\mathbf{1}_{I_\mathrm{right\, half}} \right ),
\end{align*}
respectively.
The large range of exponents  in \eqref{boundTP}  was achieved by first proving a  local bound for the variant of $\Lambda_\mathrm{d}$ with the summation in \eqref{kernel:dyad} running over a subset of dyadic squares called trees. Then, $F_4$ was set equal to $1$ and
  contributions of a single tree were   integrated into a global estimate. This established the desired estimate for the 
  dyadic model $T_\mathrm{d}$ of $T$ defined by the relation   $T_\mathrm{d}(F_1,F_2,F_3):= \Lambda_\mathrm{d}(F_1,F_2,F_3,1)$.

It remained to tackle $T$ for continuous kernels.  Via the cone decomposition, see  \cite{th:wpa}, this problem was first reduced to the  case  
\begin{align}\label{kernel}
\kappa(s,t)=\sum_{k\in \Z}2^k\varphi(2^ks)2^k\psi(2^kt),
\end{align}
where $\varphi,\psi \in \S(\R) $ are two Schwartz functions and $\widehat\psi$ is supported on $\{1\leq |\xi| \leq 2 \}$. 
Their dilations  by $2^k$ can be seen as continuous analogues of $\varphi_I$, $\psi_I$. 
In \cite{kovac:tp}, the bound \eqref{boundTP} was  finally established by relating the special case of $T$, associated with \eqref{kernel}, to the dyadic $T_\mathrm{d}$. 
This was done by rewriting $T$ and $T_\mathrm{d}$ using convolutions and martingale averages in the respective cases. Then, the  square functions of Jones, Seeger and Wright \cite{jsw} were used to compare the continuous with the discrete averaging operator.

A natural question  is what we can say about $\Lambda$    if the function $1$ is replaced by  a completely general function $F_4$. For the dyadic model $\Lambda_\mathrm{d}$,  Kova\v{c}  proved   $(\mathrm{L}^{p_1},\mathrm{L}^{p_2},\mathrm{L}^{p_3},\mathrm{L}^{p_4})$ estimates whenever $p_j$ are H\"older-type  exponents satisfying $2< p_j< \infty$ for all $j$. See \cite{kovac:tp} and \cite{kovac:bf}. 
However, due 
to the more complex structure of the form, 
one cannot efficiently rewrite $\Lambda$ and $\Lambda_\mathrm{d}$ in a similar way as $T$ and $T_\mathrm{d}$ to  exploit the mentioned square functions. Thus,  the question about $\Lambda$ associated with any $m$ satisfying \eqref{symest} remains.

In the present note we obtain an answer in this direction by adapting the technique used to treat $\Lambda_\mathrm{d}$ in \cite{kovac:tp} to the continuous setting. 
We address  the simplest $\mathrm{L}^4$ case only. It is expected that suitable tree decompositions will eventually enable us  
to prove \eqref{mainest} for a larger range of exponents.  However, for the considered quadrilinear form we cannot make use of the fiber-wise Calder\'on-Zygmund decomposition by Bernicot.

The core argument in \cite{kovac:tp} intertwines two applications of the Cauchy-Schwarz inequality,  
 which gradually separates the functions $F_j$, 
and two applications of an algebraic identity, which "interchanges" the functions $\varphi^\mathrm{d}$ and $\psi^\mathrm{d}$. 
This identity, involving a telescoping argument in the dyadic case, is now replaced by a differential equality combining the fundamental theorem of calculus and the Leibniz rule.  
The main issue in the continuous setup is that the mentioned algebraic trick can be applied twice if the functions $\varphi,\psi$, decomposing the kernel, are sufficiently symmetric. For example, even functions would work. Moreover, they need to possess enough decay and have certain smoothness properties,  which should be maintained throughout the process. Suitable candidates  which fulfil the requirements are, for instance, the Gaussian exponential functions. 

Although we cannot expect our functions $\varphi,\psi$ to be even, much less the Gaussian exponential functions,   we are able to overcome the mentioned restrictions 
as follows. First, the reduction to the case of a  concrete kernel, such as  \eqref{kernel}, is done by a careful choice of the functions $\varphi,\psi$. 
This way we obtain some of the required symmetry and regularity. Second, after each application of the Cauchy-Schwarz inequality we dominate certain
  functions   with a suitable superposition of dilated  Gaussian exponential functions. This gradually reduces the two algebraic steps to the case of Gaussians, 
which most resembles the dyadic telescoping trick.

Besides extending the exponent range, it would be of interest to obtain boundedness results for the continuous models of the forms from \cite{kovac:bf}, associated with bipartite graphs.   

Let us  briefly comment on another related open problem. There is a question of establishing $\mathrm{L}^p$ estimates for the akin trilinear form
\begin{align*}
\Lambda_\bigtriangleup (F_1,F_2,F_3) := \int_\R \widehat{\boldsymbol{F}}(\xi,\xi,\xi)\mathrm{sgn}(\xi) d\xi
\end{align*}
where the entangled product $\boldsymbol{F}$ is now given by
\begin{align*}
\boldsymbol{F}(x,y,z):=F_1(x,y)F_2(y,z)F_3(z,x).
\end{align*}
Passing to the spatial side, one has up to a constant
\begin{align*}
\Lambda_\bigtriangleup(F_1,F_2,F_3) = \int_{\R^3}F_1(x,y)F_2(y,z)F_3(z,x)\frac{-1}{x+y+z}dxdydz.
\end{align*}
The  structure of $\Lambda_\bigtriangleup$ corresponds to the three-cycle and  for this reason it is   called the {\em  triangular Hilbert transform}. 
No $\mathrm{L}^p$ bounds for $\Lambda_\bigtriangleup$ or for its dyadic model are known. Lack of the bipartite structure prevents to approach it with the techniques from \cite{kovac:bf}. 

Boundedness of $\Lambda_\bigtriangleup$ would  imply boundedness for certain instances of the two- dimensional bilinear Hilbert transform and the twisted paraproduct.  
Further interest  in $\Lambda_\bigtriangleup$  arises from ergodic theory. 
It is proposed by Demeter and Thiele \cite{demTh:BHT} to approach the open question of
pointwise almost everywhere convergence for ergodic averages
\begin{align*}
 \frac{1}{N} \sum_{n=1}^N f(T^nx)g(S^nx),
\end{align*}
where $S,\,T:X\rightarrow X$ are two commuting measure preserving transformations on a probability space $X$, via an examination of the triangular Hilbert transform.\\

{\bf Acknowledgement.}
I am grateful to my  advisor Prof. Christoph Thiele for his constant support, valuable consultations on the problem and numerous suggestions on improving the text. I am thankful to Vjekoslav Kova\v{c} for useful discussions.


\section{Decomposition of the symbol}
\label{sec:cone}
To begin, we  reduce the general symbol to a particular function by decomposing $m$ into pieces which are supported on certain subsets of two double cones. We follow the main ideas discussed  in \cite{th:wpa}. However, we do not discretize, but rather keep continuum in the scale. 

The Fourier transform we shall use throughout this note is defined as 
\begin{align}\label{ft-def}
\widehat{f}(\omega):=\int_{\R^n} f(\tau)e^{-2\pi i \tau\cdot \omega } d\tau.
\end{align}

By a smooth partition of unity and symmetry    in  $\xi,\eta$   we may assume that $m$ is supported on 
the double cone   
$$\{(\xi,\eta) : |\xi|\leq 1.001 |\eta|\}$$
centered around the $\eta$-axis.
Choosing double cones over single cones will allow us to use  functions that are symmetric around the origin. 
We can choose the partition of unity 
 such that \eqref{symest} is preserved, possibly with a different constant.
 \begin{figure}[htb] 
\centering
\begin{tikzpicture}[scale=0.6,rotate=90]
\fill [fill=gray, opacity = 0.1] (-3.5,-3.5) -- (-3.5/1.01,-3.5)-- (0,0) -- (-3.5/1.01,3.5) -- (-3.5,3.5);
\fill [fill=gray, opacity = 0.1] (3.5,-3.5)--(3.5/1.01,-3.5) -- (0,0) -- (3.5/1.01,3.5) --(3.5,3.5);
\draw [thick,domain=-atan(1.01):atan(1.01)] plot ({1.7*cos(\x)}, {1.7*sin(\x)});
\draw [thick,domain=-atan(1.01):atan(1.01)] plot ({2.7*cos(\x)}, {2.7*sin(\x)});
\draw [thick,domain=-atan(1.01)+180:atan(1.01)+180] plot ({1.7*cos(\x)}, {1.7*sin(\x)});
\draw [thick,domain=-atan(1.01)+180:atan(1.01)+180] plot ({2.7*cos(\x)}, {2.7*sin(\x)});
\draw[dashed] [domain=-3.5/1.01:3.5/1.01, samples=50] plot (\x, 1.001*\x);
\draw[dashed] [domain=-3.5/1.01:3.5/1.01, samples=50] plot (\x, -1.001*\x);
\draw[->] (-3.5,0) -- (3.5,0);
\draw[<-] (0,-3.5) -- (0,3.5);
\draw[right, font=\footnotesize] (3.25,0) node {$\eta$};
\draw[below, font=\footnotesize] (0,-3.4) node {$\xi$};
\draw (1,-2) -- (1,2) -- (3,2) -- (3,-2) -- cycle;
\draw (-1,-2) -- (-1,2) -- (-3,2) -- (-3,-2) -- cycle;
\draw[fill=white] (0,0) circle (1mm);
\end{tikzpicture}
\caption{Decomposition of $m$.} \label{fig:cone}
\end{figure}
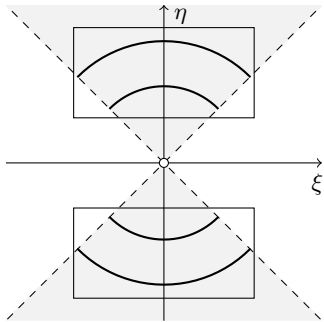

By $B(0,R)$ we denote the ball of radius $R$  centered at the origin in $\R^2$. Let $\theta$ be a function on $\R^2$ such that $\widehat{\theta}$ is smooth, real, radial and supported in the annulus $B(0,2.7)\setminus B(0,1.7)$. 
We normalize so that for every $(\xi,\eta)\neq 0$ we have
$$\int_0^\infty\widehat{\theta}(t\xi, t\eta)\frac{dt}{t}=1.$$ This can be achieved, since $\widehat{\theta}$ is radial and  supported away from $0$. Then we can write
$$m(\xi,\eta)=\int_0^\infty m_t(\xi,\eta) \frac{dt}{t},$$
where $m_t(\xi,\eta):=m(\xi,\eta)\widehat{\theta}(t\xi,t\eta)$.

In what follows we will be working with certain smooth bump functions, for which we need the following technical lemma. Its proof can be found in the appendix.
\begin{lemma}\label{lemma:bump}
Let $\varepsilon:=0.001$.
There exists a non-negative real-valued function $f\in C_0^\infty(\R)$ which is supported in $[1,3]$,  even about $2$  and  constantly equal $1$ on $[1+\varepsilon,3-\varepsilon]$, such that
$f^{1/2}$ and
\begin{align}\label{square-root}
\Big (\int_x^\infty \frac{f(t)+f(-t)}{t} dt \Big )^{1/2}
\end{align} belong to $C_0^\infty(\R)$.
\end{lemma}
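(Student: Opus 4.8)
The plan is to build $f$ in two stages, first arranging that $f^{1/2}$ is smooth and then checking that the antiderivative in \eqref{square-root} survives taking a square root. For the first stage I would start from a standard smooth bump $\beta\in C_0^\infty(\R)$ supported in $[-\varepsilon,\varepsilon]$, non-negative, even, with $\int\beta=1$, and set $g:=\mathbf{1}_{[1+\varepsilon,3-\varepsilon]}*\beta$; this $g$ is smooth, supported in $[1,3]$, even about $2$, and constantly $1$ on $[1+\varepsilon,3-\varepsilon]$. The issue is that $g^{1/2}$ is typically only Lipschitz at the edges of the support, not $C^\infty$. The fix I would use is the classical trick for producing smooth square roots: replace $g$ near its ``switch-on'' regions by a function that vanishes to infinite order, e.g. define $f:=h^2$ where $h$ is itself a smooth bump constructed from the building block $e^{-1/x}\mathbf{1}_{x>0}$ (more precisely, take the standard $C^\infty$ transition function $\rho(x):=e^{-1/x}\mathbf 1_{x>0}/(e^{-1/x}+e^{-1/(1-x)})\mathbf 1_{x<1}$, and glue two scaled, reflected copies on the intervals $[1,1+\varepsilon]$ and $[3-\varepsilon,3]$ with the constant $1$ in between). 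Then $f=h^2$ is automatically non-negative, in $C_0^\infty$, supported in $[1,3]$, even about $2$ if $h$ is, equal to $1$ on $[1+\varepsilon,3-\varepsilon]$, and $f^{1/2}=|h|=h\ge 0$ is smooth.

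For the second stage, write $G(x):=\int_x^\infty \frac{f(t)+f(-t)}{t}\,dt$ and note that since $f$ is supported in $[1,3]$, the integrand $\frac{f(t)+f(-t)}{t}$ is a smooth compactly supported function, so $G$ is smooth, non-negative, non-increasing, constant ($=\int_{-\infty}^\infty \frac{f(t)+f(-t)}{t}\,dt \cdot$ a harmless normalization; in fact $G$ is constant equal to $C_0:=\int_1^3\frac{f(t)}{t}dt>0$) for $x\le -3$ and identically $0$ for $x\ge 3$. Thus the only place where smoothness of $G^{1/2}$ could fail is where $G$ vanishes, i.e. near $x=3$ (and for $x>3$ it is the constant $0$, which is fine). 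Near $x=3$ we have $G(x)=\int_x^3 \frac{f(t)}{t}\,dt$, and because $f$ was arranged to vanish to infinite order at $t=3$ (that is exactly what the $e^{-1/x}$-type construction gives on $[3-\varepsilon,3]$), $G$ itself vanishes to infinite order at $x=3$: indeed all derivatives of $G$ at $3$ are, up to signs and lower-order terms from differentiating $1/t$, derivatives of $f$ at $3$, hence zero. A function vanishing to infinite order at a point, non-negative, and smooth away from that point, has a smooth square root — this is again the same ``infinitely flat'' phenomenon, and can be verified directly by writing $G(x)=e^{-1/(3-x)}\cdot (\text{smooth positive})$ near $3$ using the explicit form of the transition function, so that $G^{1/2}=e^{-1/(2(3-x))}\cdot(\text{smooth})$ is manifestly smooth up to $x=3$, and identically $0$ beyond. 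Hence $G^{1/2}\in C_0^\infty(\R)$.

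I expect the main obstacle to be the behavior of $G^{1/2}$ at the single point $x=3$ where $G$ touches zero: one must be a little careful that the decay of $f$ at its right endpoint is fast enough — and fast enough at every order — to pull $G$ down to zero with infinite order of vanishing, so that the square root stays $C^\infty$. Choosing $f$ of the form $h^2$ with $h$ built from $e^{-1/x}$-type transitions handles this uniformly: the same explicit elementary estimate ($\frac{d^k}{dx^k}e^{-1/x}=P_k(1/x)e^{-1/x}$ for a polynomial $P_k$, hence $o((3-x)^N)$ for every $N$) controls all the derivatives at once, and an analogous computation shows that dividing out $e^{-1/(3-x)}$ leaves something smooth and positive, which is all that is needed. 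The evenness about $2$ is a cosmetic constraint and is preserved throughout by choosing all building blocks symmetrically; it is not used below in an essential way but is free of charge here. This completes the construction, and the details are relegated to the appendix as stated.
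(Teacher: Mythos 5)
Your first stage is sound and parallels the paper: both build $f$ from $e^{-1/x}$-type building blocks so that $f^{1/2}$ is manifestly smooth. The gap is in the second stage. You assert that $G(x)=e^{-1/(3-x)}\cdot(\text{smooth positive})$ near $x=3$, and then that $G^{1/2}=e^{-1/(2(3-x))}\cdot(\text{smooth})$ is ``manifestly smooth''. Neither claim is verified, and as stated the first one is wrong. With $f$ built from the transition function $\rho$, one has near $t=3$ that $f(t)=e^{-c/(3-t)}w(t)$ with $w$ smooth and positive; hence, writing $u=3-x$,
\begin{equation*}
G(3-u)=\int_0^u e^{-c/s}\,\frac{w(3-s)}{3-s}\,ds .
\end{equation*}
The quotient $r(u):=e^{c/u}G(3-u)$ tends to $0$ as $u\to 0^+$ (repeated integration by parts gives $r(u)\sim \frac{u^2}{c}\,w(3)/3$), so the factor you call ``smooth positive'' actually vanishes at the endpoint. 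More fundamentally, the fallback principle you invoke — that a non-negative smooth function vanishing to infinite order has a smooth square root — is false: $g(x)=e^{-1/x}\bigl(\sin^2(1/x)+e^{-1/x}\bigr)\mathbf 1_{x>0}$ is smooth, non-negative, vanishes to infinite order at $0$, yet $g^{1/2}$ fails to be $C^2$ there (its second derivative blows up like $n^4$ along $x_n=1/(n\pi)$). So concluding $G^{1/2}\in C^\infty$ from infinite flatness alone is not licit; you would have to prove by hand that $r$ and $r(u)/u^2$ are smooth (a Watson-lemma-type argument), which is a nontrivial missing step.

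The paper's appendix avoids this entirely by engineering the cancellation of the $1/t$ weight into the definition of $f$: taking $\varphi_1(x)=\bigl((3-x)\varphi'(x)\bigr)'$ with $\varphi(x)=e^{-1/x}\mathbf 1_{(0,\infty)}(x)$ and defining $f$ as an antiderivative yields $f(x)=c\,(4-x)\varphi'(x-1)$ near $x=1$. Consequently, after $f(-t)=f(t+4)$, the factor $(4-u)$ cancels the $1/(u-4)$ in the weight exactly, and the integral in \eqref{square-root} collapses on the nose to $c\,\varphi(x+3)=c\,e^{-1/(x+3)}$ near $x=-3$. Both square roots are then of the explicit elementary form $(\text{smooth})\cdot P(1/s)\,e^{-a/s}$ with $P$ a polynomial, and smoothness is genuinely manifest. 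If you want to keep your more elementary construction of $f$, you need either to insert an analogous compensating factor so that the $1/t$-weighted antiderivative has a closed form, or to carry out the asymptotic analysis of $r(u)$ explicitly; the general ``infinitely flat'' principle cannot carry the argument.
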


Now consider $m_1$. Its support is contained in the union of the rectangles
\begin{align*}
[-2,2] \times [-3,-1] \hspace{1cm}\mathrm{and}\hspace{1cm} [-2,2] \times [1,3].
\end{align*}
Let $f$ be the function from Lemma \ref{lemma:bump} and let $\vartheta_1,\vartheta_2 \in \S(\R)$ be such that  $\widehat{\vartheta_1}(\xi)=f((\xi+4)/2)$ and $\widehat{\vartheta_2}(\xi)=f(\xi)+f(-\xi)$. 
Then $\widehat{\vartheta_1}\otimes\widehat{\vartheta_2}$  equals $1$ on the support of $m_1$.
Thus, by dilating $\widehat{\vartheta_1},\widehat{\vartheta_2}$  in $t$,    for every $t>0$ we  can write
$$m_t(\xi,\eta)=m_t(\xi,\eta)  \widehat{\vartheta_1}(t\xi)\widehat{\vartheta_2}(t\eta). $$ 
This can be  rewritten further using the Fourier inversion formula on $m_t$ as 
\begin{align*}
m_t(\xi,\eta)=\left( \int_{\R^2}\mu_t(u,v)  e^{2\pi i ut\xi}e^{2\pi i vt\eta} dudv \right ) \widehat{\vartheta_1}(t\xi) \widehat{\vartheta_2}(t\eta),
\end{align*} 
where $\mu_t :=t^2\widehat{m_t}(t\cdot,t\cdot)$. Integrating by parts sufficiently many times, using that \eqref{symest} holds for $m(\xi/t,\eta/t)$ uniformly in $t$ and considering the support of $m_t$ we obtain
\begin{align*}
|\mu_t(u,v)| & = t^2 \left| \int_{\R^2} m_t(\xi,\eta)e^{-2\pi i  (u t\xi + v t\eta)} d\xi d\eta\right | = \left| \int_{\R^2} m_t \Big (\frac{\xi}{t},\frac{\eta}{t} \Big )e^{-2\pi i (u \xi + v \eta)} d\xi d\eta\right | \\
& \lesssim (1+|u|)^{-12}(1+|v|)^{-12}.
\end{align*} 

Define $\varphi^{(u)},\psi^{(v)}$ by
\begin{align}\label{root-1}
 \widehat{\varphi^{(u)}}(\xi)&:=(1+|u|)^{-5}(\widehat{\vartheta_1}(\xi))^{1/2}e^{\pi i u\xi},\\
\nonumber \widehat{\psi^{(v)}}(\eta)&:=(\widehat{\vartheta_2}(\eta))^{1/2}e^{\pi i v\eta}.
\end{align}
By Lemma \ref{lemma:bump} we have $(\widehat{\vartheta_1})^{1/2}\in C_0^\infty(\R)$, so the function $\varphi^{(u)}$ satisfies the bound 
\begin{align}\label{fct-est}
 |\varphi^{(u)}(x)|\lesssim (1+|x|)^{-5}
\end{align}
uniformly  in $u$. We will apply this fact in the following section.   
Now we can write
$$m(\xi,\eta)=\int_0^\infty  \int_{\R^2}  \widetilde{\mu}_t(u,v) (\widehat{\varphi^{(u)}}(t\xi))^2 (\widehat{\psi^{(v)}}(t\eta))^2 dudv \frac{dt}{t},$$
where the coefficients $\widetilde{\mu}_t$ are defined as $$\widetilde{\mu}_t(u,v):= (1+|u|)^{10}\mu_t(u,v).$$  
Note that $(\widehat{\vartheta_1})^{1/2}$ and $(\widehat{\vartheta_2})^{1/2}$ are real-valued and even, 
so $\varphi^{(u)},\psi^{(v)}$ are multiples of translates of real-valued functions and thus real-valued. 

To summarize, on a double cone we have decomposed $\Lambda(F_1,F_2,F_3,F_4)$ into
\begin{align*}
 \int_{\R^2} \int_0^\infty \widetilde{\mu}_t(u,v)  \int_{\R^2}  \widehat{\boldsymbol{F}}(\xi,-\xi,\eta,-\eta)(\widehat{\varphi^{(u)}}(t\xi))^2 (\widehat{\psi^{(v)}}(t\eta))^2 d\xi d\eta\frac{dt}{t} dudv.
\end{align*}
By the rapid decay of the coefficients $\widetilde{\mu}_t$ it will suffice to prove \eqref{mainest} for the form
\begin{align}\label{def:lambda}
 \int_0^\infty \Big | \int_{\R^2}  \widehat{\boldsymbol{F}}(\xi,-\xi,\eta,-\eta)(\widehat{\varphi^{(u)}}(t\xi ))^2 (\widehat{\psi^{(v)}}(t\eta))^2 d\xi d\eta \Big | \frac{dt}{t},
\end{align}
provided that the estimate holds uniformly in the parameters $u,v$. 

From now on we assume that the functions $F_j\in \S(\R^2)$ are real-valued, as otherwise we can split them into real and imaginary parts and use quadrisublinearity of \eqref{def:lambda}.


\section{Proof of Theorem \ref{mainthm}}
\label{sec:pf}
The proof proceeds with studying the special case \eqref{def:lambda}. For $t>0$ and   four functions $\phi_i\in \S(\R)$ we define  
\begin{align*}
L_{\phi_1,\phi_2,\phi_3,\phi_4}^t(F_1,F_2,F_3,F_4) :=  \int_{\R^2}  \widehat{\boldsymbol{F}}(\xi,-\xi,\eta,-\eta)\widehat{\phi_{1}} (t\xi)\widehat{\phi_{2}}  (-t\xi)\widehat{\phi_{3}} (t\eta)\widehat{\phi_{4}} (-t\eta)d\xi d\eta  .
\end{align*}
For the rest of this note we will consider objects of the type
\begin{align*}
\Lambda_{\phi_1,\phi_2,\phi_3,\phi_4}(F_1,F_2,F_3,F_4) := \int_0^\infty L^t_{\phi_1,\phi_2,\phi_3,\phi_4}(F_1,F_2,F_3,F_4) \frac{dt}{t}
\end{align*}
and 
\begin{align}\label{lambdaT-L}
\widetilde{\Lambda}_{\phi_1,\phi_2,\phi_3,\phi_4}(F_1,F_2,F_3,F_4) := \int_0^\infty \Big | L^t_{\phi_1,\phi_2,\phi_3,\phi_4}(F_1,F_2,F_3,F_4) \Big | \frac{dt}{t}.
\end{align}
Observe that   \eqref{def:lambda}  is obtained from \eqref{lambdaT-L} by choosing 
\begin{align*}
\begin{array}{ll}
\phi_1=\varphi^{(u)}, & \phi_3=\psi^{(v)},\\
\phi_2=\varphi^{(-u)}, & \phi_4=\psi^{(-v)}.
\end{array}
\end{align*} This follows from \eqref{root-1} and from the functions $\widehat{\vartheta_1},\widehat{\vartheta}_2$ being even. 

We  shall now express $L^t_{\phi_1,\phi_2,\phi_3,\phi_4}$ on the spatial side.
Let us denote by $[f]_t$ the $\mathrm{L}^1$-dilation of a function $f$ by a parameter $t>0$, i.e. $[f]_t(x):=t^{-1}f(t^{-1}x)$.  Then, $\widehat{[f]_t}(\xi)=\widehat{f}(t\xi)$.
Since the integral of the Fourier transform of a Schwartz function in $\R^4$  over the hyperplane 
$$\{(\xi,-\xi,\eta,-\eta) : \xi,\eta\in \R\}$$
  equals the integral of the  function itself over the perpendicular hyperplane
$$\{(p,p,q,q) : p,q\in \R\},$$
we can write $L^t_{\phi_1,\phi_2,\phi_3,\phi_4}(F_1,F_2,F_3,F_4)$ as
\begin{align*}
 \int_{\R^2} \boldsymbol{F}*([\phi_{1}]_t\otimes [\phi_{2}]_t \otimes [\phi_{3}]_t \otimes [\phi_{4}]_t)(p,p,q,q)
dp dq.
\end{align*}
Expanding the convolution, the last display can be identified as
\begin{align*}
   \int_{\R^6}  &  F_1(x,y)F_2(x',y)F_3(x',y')F_4(x,y')\\
&[\phi_{1}]_t(p-x)[\phi_{2}]_t(p-x')[\phi_{3}]_t(q-y)[\phi_{4}]_t(q-y') dxdx'dydy' dpdq.
\end{align*}

Now we are ready to start. The inequality \eqref{mainest}, which we want to establish, is homogeneous, so we may normalize 
$$\|F_j\|_{\mathrm{L}^4(\R^2)}=1,$$ 
for $j=1,2,3,4$.
Thus, we are set to show 
$$\widetilde{\Lambda}_{\varphi^{(u)},\varphi^{(-u)},\psi^{(v)},\psi^{(-v)}}(F_1,F_2,F_3,F_4)\lesssim 1.$$

The proof starts with an application of the Cauchy-Schwarz inequality. To preserve the mean zero property of $\psi^{(v)},\psi^{(-v)}$ we separate 
the involved functions according to the variables $y,y'$ and estimate $\widetilde{\Lambda}_{\varphi^{(u)},\varphi^{(-u)},\psi^{(v)},\psi^{(-v)}}(F_1,F_2,F_3,F_4)$ by 
\begin{align*}
 \int_0^\infty \int_{\R^4} \left | \int_{\R}F_1(x,y)F_2(x',y)[\psi^{(v)}]_{t}(q-y)  dy \right| \left|\int_{\R} F_3(x',y')F_4(x,y') [\psi^{(-v)}]_{t}(q-y') dy' \right| & \\
   [|\varphi^{(u)}|]_t(p-x) [|\varphi^{(-u)}|]_t(p-x') dxdx'dpdq\frac{dt}{t}. &
\end{align*}
Applying the Cauchy-Schwarz inequality  bounds this expression by the  product
\begin{align*}
{\Lambda}_{|\varphi^{(u)}|,|\varphi^{(-u)}|,\psi^{(v)},\psi^{(v)}}(F_1,F_2,F_2,F_1)^{1/2} 
 {\Lambda}_{|\varphi^{(u)}|,|\varphi^{(-u)}|,\psi^{(-v)},\psi^{(-v)}}(F_4,F_3,F_3,F_4)^{1/2}.
\end{align*}
We estimate the first factor of the above display, the second is dealt with similarly. 

To further separate the involved functions  we would like to apply the Cauchy-Schwarz inequality again, which now needs to be done in the complementary variables. 
So we need to "switch" the functions $\varphi^{(u)}$ and $\psi^{(v)}$. This is where we make use of the following lemma, a continuous analogue of the telescoping identity from \cite{kovac:tp}.
\begin{lemma} \label{tel}
Assume that we have two pairs of real-valued Schwartz functions $(\rho_i,\sigma_i)$, $i=1,2$,  which satisfy
\begin{align}\label{tel-theid}
-t\partial_t|\widehat{\rho_{i}}(t\tau)|^2=|\widehat{\sigma_{i}}(t\tau)|^2\hspace{0.5cm}\mathrm{for} \hspace{0.5cm} i=1,2.
\end{align}
Then with $c:=|\widehat{\rho_1}(0)|^2|\widehat{\rho_2}(0)|^2$ we have
\begin{align}\label{tel-id}
{\Lambda}_{\sigma_1,\rho_2} (F_1,F_2,F_3,F_4) + {\Lambda}_{\rho_1,{\sigma_2}}(F_1,F_2,F_3,F_4)
 =  c \int_{\R^2}F_1F_2F_3F_4,
\end{align}
where we have  denoted $\Lambda_{\sigma,\rho}= \Lambda_{\sigma,\sigma,\rho,\rho}$. 
\end{lemma}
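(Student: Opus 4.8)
The plan is a direct computation carrying out the ``fundamental theorem of calculus plus Leibniz rule'' scheme announced in the introduction. Since $\rho_i$ and $\sigma_i$ are real-valued we have $\widehat{\sigma_i}(-t\tau)=\overline{\widehat{\sigma_i}(t\tau)}$, and likewise for $\rho_i$, so for the index patterns occurring in $\Lambda_{\sigma,\rho}=\Lambda_{\sigma,\sigma,\rho,\rho}$ the definition of $L^t$ collapses to
\begin{align*}
L^t_{\sigma_1,\sigma_1,\rho_2,\rho_2}(F_1,F_2,F_3,F_4)&=\int_{\R^2}\widehat{\boldsymbol{F}}(\xi,-\xi,\eta,-\eta)\,|\widehat{\sigma_1}(t\xi)|^2\,|\widehat{\rho_2}(t\eta)|^2\,d\xi\,d\eta,\\
L^t_{\rho_1,\rho_1,\sigma_2,\sigma_2}(F_1,F_2,F_3,F_4)&=\int_{\R^2}\widehat{\boldsymbol{F}}(\xi,-\xi,\eta,-\eta)\,|\widehat{\rho_1}(t\xi)|^2\,|\widehat{\sigma_2}(t\eta)|^2\,d\xi\,d\eta,
\end{align*}
where $\boldsymbol{F}=\boldsymbol{F}_{(F_1,F_2,F_3,F_4)}$. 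Using the hypothesis \eqref{tel-theid} for $i=1$ in the first line and for $i=2$ in the second, the two integrands add, with $(\xi,\eta)$ held fixed, to
\begin{align*}
\bigl(-t\partial_t|\widehat{\rho_1}(t\xi)|^2\bigr)|\widehat{\rho_2}(t\eta)|^2+|\widehat{\rho_1}(t\xi)|^2\bigl(-t\partial_t|\widehat{\rho_2}(t\eta)|^2\bigr)=-t\partial_t\Bigl(|\widehat{\rho_1}(t\xi)|^2|\widehat{\rho_2}(t\eta)|^2\Bigr)
\end{align*}
by the product rule. This is the continuous analogue of the dyadic telescoping step.

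Writing $G_{\xi,\eta}(t):=|\widehat{\rho_1}(t\xi)|^2|\widehat{\rho_2}(t\eta)|^2$, so that the displayed quantity is $-tG_{\xi,\eta}'(t)$, I would then integrate against $dt/t$ and add the two forms. Since $-tG_{\xi,\eta}'(t)\,\tfrac{dt}{t}=-G_{\xi,\eta}'(t)\,dt$, interchanging the $t$- and $(\xi,\eta)$-integrations gives
\begin{align*}
\Lambda_{\sigma_1,\rho_2}(F_1,F_2,F_3,F_4)+\Lambda_{\rho_1,\sigma_2}(F_1,F_2,F_3,F_4)=\int_{\R^2}\widehat{\boldsymbol{F}}(\xi,-\xi,\eta,-\eta)\Bigl(\int_0^\infty-G_{\xi,\eta}'(t)\,dt\Bigr)d\xi\,d\eta.
\end{align*}
By the fundamental theorem of calculus the inner integral equals $G_{\xi,\eta}(0^+)-G_{\xi,\eta}(\infty)$. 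Continuity of $\widehat{\rho_i}$ at $0$ gives $G_{\xi,\eta}(0^+)=|\widehat{\rho_1}(0)|^2|\widehat{\rho_2}(0)|^2=c$; since $\widehat{\rho_1},\widehat{\rho_2}$ are Schwartz, for $(\xi,\eta)\neq(0,0)$ at least one factor of $G_{\xi,\eta}(t)$ tends to $0$ as $t\to\infty$ while the other stays bounded, so $G_{\xi,\eta}(\infty)=0$ for almost every $(\xi,\eta)$. Hence the inner integral is $c$ a.e., and
\begin{align*}
\Lambda_{\sigma_1,\rho_2}+\Lambda_{\rho_1,\sigma_2}=c\int_{\R^2}\widehat{\boldsymbol{F}}(\xi,-\xi,\eta,-\eta)\,d\xi\,d\eta=c\int_{\R^2}\boldsymbol{F}(p,p,q,q)\,dp\,dq=c\int_{\R^2}F_1F_2F_3F_4,
\end{align*}
the middle step being the same hyperplane identity already used in this section to pass $L^t$ to the spatial side, and the last step being the definition of $\boldsymbol{F}$ evaluated on the diagonal.

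The only point that needs justification is the interchange of integrals, and here \eqref{tel-theid} does the work: it says $-t\partial_t|\widehat{\rho_i}(t\tau)|^2=|\widehat{\sigma_i}(t\tau)|^2\geq0$, so each $|\widehat{\rho_i}(t\tau)|^2$ is non-increasing in $t$, whence $G_{\xi,\eta}$ is non-increasing, $-G_{\xi,\eta}'\geq0$, and $\int_0^\infty(-G_{\xi,\eta}'(t))\,dt=G_{\xi,\eta}(0^+)-G_{\xi,\eta}(\infty)\leq c<\infty$ uniformly in $(\xi,\eta)$. Combined with the fact that $(\xi,\eta)\mapsto\widehat{\boldsymbol{F}}(\xi,-\xi,\eta,-\eta)$ is Schwartz on $\R^2$, being the restriction of the Schwartz function $\widehat{\boldsymbol{F}}$ to a subspace, Tonelli's theorem applied to the nonnegative integrand $|\widehat{\boldsymbol{F}}(\xi,-\xi,\eta,-\eta)|\,(-G_{\xi,\eta}'(t))$ yields joint integrability; in particular each of $\Lambda_{\sigma_1,\rho_2}$ and $\Lambda_{\rho_1,\sigma_2}$ converges absolutely, since $|\widehat{\sigma_1}(t\xi)|^2|\widehat{\rho_2}(t\eta)|^2$ and $|\widehat{\rho_1}(t\xi)|^2|\widehat{\sigma_2}(t\eta)|^2$ are each one of the two nonnegative terms summing to $-tG_{\xi,\eta}'(t)$. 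I do not anticipate a genuine obstacle; the substance of the lemma is the one-line algebraic identity of the first paragraph, the rest being bookkeeping.
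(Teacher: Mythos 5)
Your proof is correct and follows essentially the same route as the paper: real-valuedness collapses the fourfold products $\widehat{\phi}(t\tau)\widehat{\phi}(-t\tau)$ into $|\widehat{\phi}(t\tau)|^2$, the hypothesis \eqref{tel-theid} plus the Leibniz rule recombines the two integrands into $-t\partial_t G_{\xi,\eta}(t)$, the fundamental theorem of calculus evaluates the $t$-integral as $c$ a.e., and the hyperplane identity converts $\int\widehat{\boldsymbol{F}}(\xi,-\xi,\eta,-\eta)\,d\xi\,d\eta$ into $\int F_1F_2F_3F_4$. The only cosmetic difference is the direction of the computation (the paper starts from the FTC identity and expands by Leibniz; you start from the two forms and recognize the Leibniz structure), and you supply the Tonelli justification and the boundary analysis at $t\to\infty$ which the paper leaves implicit.
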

\begin{proof}
By the fundamental theorem of calculus, 
\begin{align}\label{tel-ftc}
\int_0^\infty \partial_t (| \widehat{\rho_1}( t\xi)|^2
|\widehat{\rho_2}(t\eta)|^2) dt = -|\widehat{\rho_1} (0)|^2|\widehat{\rho_2}(0)|^2.
\end{align}
The left hand-side of \eqref{tel-ftc} equals
\begin{align}\label{tel:second-last}
 & \int_0^\infty t \partial_t  (|\widehat{\rho_1} (t\xi)|^2)
|\widehat{\rho_2}(t\eta)|^2 \frac{dt}{t}\\ \nonumber
+ & \int_0^\infty  |\widehat{\rho_1}(t\xi)|^2
t\partial_t  (|\widehat{\rho_2}(t\eta)|^2)  \frac{dt}{t}.
\end{align}
The functions $\rho,\sigma$ are real-valued, so $\overline{\widehat{\rho}}(\eta)=\widehat{\rho}(-\eta)$, and analogously for $\sigma$.
Together with \eqref{tel-theid} this shows that  \eqref{tel:second-last} can be written as
\begin{align}\label{pf-tel}
-\int_0^\infty \widehat{\sigma_1}(t\xi)\widehat{\sigma_1}(-t\xi)
\widehat{\rho_2}(t\eta)\widehat{{\rho_2}}(-t\eta) \frac{dt}{t}
-\int_0^\infty \widehat{\rho_1}(t\xi)\widehat{\rho_1}(-t\xi)
\widehat{\sigma_2}(t\eta)\widehat{{\sigma_2}}(-t\eta) \frac{dt}{t}.
\end{align}

Now multiply \eqref{tel-ftc} by $\widehat{\boldsymbol{F}}(\xi,-\xi,\eta,-\eta)$ and integrate in the variables $\xi,\eta$.   
It remains to use    \eqref{pf-tel} and to  evaluate the right hand-side of \eqref{tel-ftc}  as  $-|\widehat{\rho_1}(0)|^2|\widehat{\rho_2}(0)|^2$ times
\begin{align*}
 \int_{\R^2}\widehat{\boldsymbol{F}}(\xi,-\xi,\eta,-\eta)d\xi d\eta  & =   \int_{\R^2}\boldsymbol{F}(x,x,y,y) dxdy \\
& =   \int_{\R^2} F_1(x,y)F_2(x,y)F_3(x,y)F_4(x,y)dxdy.
\end{align*} 
This proves the claim.
\end{proof}

To apply  Lemma \ref{tel} we would like to have $\varphi^{(u)} =\varphi^{(-u)}$, as then we would get
 $${\Lambda}_{|\varphi^{(u)}|,|\varphi^{(-u)}|,\psi^{(v)},\psi^{(v)}}={\Lambda}_{|\varphi^{(u)}|,\psi^{(v)}}.$$
However, we do not have $\varphi^{(u)} =\varphi^{(-u)}$ in general. 
For this  and to circumvent possible lack of smoothness of $|\varphi^{(\pm u)}|$, we dominate  $|\varphi^{(\pm u)}|$ with a superposition of the Gaussian exponential functions. Consider 
\begin{align*}
\Phi(x):=\int_1^\infty \frac{1}{\alpha^5} e^{-\left( \frac{x}{\alpha}\right)^2} d\alpha =  \frac{1}{2x^4}({1-e^{-x^2}(x^2+1)}).
\end{align*}
The function $\Phi$  is positive, continuous at zero and for large $x$ comparable to $x^{-4}$. 
Let us denote the $\mathrm{L}^1$-normalized Gaussian rescaled by a parameter $\alpha >0$ by
\begin{align}\label{gauss}
g_{\alpha}(x)&:=\frac{1}{\sqrt{\pi}\alpha} e^{-\left( \frac{x}{\alpha}\right)^2}.
\end{align}
Then we can write  
$$\Phi=\pi^{-1/2} \int_1^\infty \frac{1}{\alpha^4}\, g_\alpha\, d\alpha.$$
Since $|\varphi^{(\pm u)}|$ satisfies the decay estimate \eqref{fct-est}, we can bound it pointwise by $\Phi$ multiplied by some positive constant which is uniform in $u$. 
 Positivity of the integrands in 
\begin{align}\label{pf-pos}\nonumber
{\Lambda}_{|\varphi^{(u)}|,|\varphi^{(-u)}|,\psi^{(v)},\psi^{(v)}}(F_1,F_2,F_2,F_1) &=  \int_0^\infty \int_{\R^4}  \Big ( \int_{\R} F_1(x,y)F_2(x',y)  [\psi^{(v)}]_{t}(q-y)  dy \Big )^2\\
 &  [|\varphi^{(u)}|]_t(p-x) \, [|\varphi^{(-u)}|]_t(p-x') dxdx'dpdq\frac{dt}{t}
\end{align}
then allows us to dominate           
\begin{align*}
{\Lambda}_{|\varphi^{(u)}|,|\varphi^{(-u)}|,\psi^{(v)},\psi^{(v)}}(F_1,F_2,F_2,F_1) \lesssim  \int_1^\infty\int_1^\infty \Lambda_{g_\alpha,g_\beta,{\psi^{(v)}},{\psi^{(v)}}}(F_1,F_2,F_2,F_1) 
\frac{d\alpha}{\alpha^4} \frac{d\beta}{\beta^4}.
\end{align*}
To reduce to only one scaling parameter in the last line we split the integration into the regions $\alpha\geq \beta$ and $\alpha < \beta$.
By symmetry it suffices to estimate the region $\alpha\geq \beta$ only, on which we bound 
$\beta g_\beta \leq \alpha g_\alpha$
for $\alpha,\beta \geq 1$. This 
leaves us with having to estimate
$$\int_1^\infty \Lambda_{g_\alpha,{\psi^{(v)}}}(F_1,F_2,F_2,F_1) \frac{d\alpha}{\alpha^3}.$$

We shall now apply Lemma \ref{tel} with $(\rho_1,\sigma_1)=(g_\alpha,h_\alpha)$ and $(\rho_2,\sigma_2)=(\phi,\psi^{(v)})$, where we define
$h_{\alpha}(x):={\alpha}(g_{\alpha})'(x)$ and 
$\phi$ is defined via
\begin{align}\label{root2}
\widehat{\phi}(\xi):=\left( \int_{\xi}^\infty | \widehat{\psi^{(v)}}(\tau) |^2 \frac{d\tau}{\tau} \right)^{1/2}.
\end{align}
Since $|\widehat{\psi^{(v)}}|^2 = \widehat{\vartheta_2}$,  by Lemma \ref{lemma:bump} the function $\widehat{\phi}$ belongs to $C_0^\infty(\R)$. 
Note that the two  pairs of functions $(\rho_i,\sigma_i)$ satisfy \eqref{tel-theid}, which  follows by a straightforward calculation.
Lemma \ref{tel} now
 yields
\begin{align} \label{after1tel} 
{\Lambda}_{g_\alpha,\psi^{(v)}}(F_1,F_2,F_2,F_1)   & =       - {\Lambda}_{h_\alpha,{\phi}}(F_1,F_2,F_2,F_1) +  \widehat{\phi}(0)^2 \int_{\R^2} F_1^2 F_2^2. \end{align}
By the Cauchy-Schwarz inequality we have 
$$\int_{\R^2}F_1^2F_2^2\leq \|F_1\|^2_{\mathrm{L}^4(\R^2)}\|F_2\|^2_{\mathrm{L}^4(\R^2)} = 1,$$ 
so it remains to consider the first term on the right hand-side of  \eqref{after1tel}. 

To estimate it we repeat the just performed steps, which   will further separate  the functions $F_1,F_2$. 
The role of $\varphi^{(\pm u)}$ is now taken over by $\phi$ and the role of $\psi^{(\pm v)}$ is assumed by $h_\alpha$. 
Therefore we can group the integrals in ${\Lambda}_{h_\alpha,\phi}$ according to the variables $x,x'$, and bound
$|{\Lambda}_{h_\alpha,\phi}(F_1,F_2,F_2,F_1)| $ by
\begin{align*}
  \int_0^\infty \int_{\R^4}  \left | \int_{\R}F_1(x,y)F_1(x,y')[h_{\alpha}]_t(p-x)  dx \right|
 \left|\int_{\R} F_2(x',y')F_2(x',y) [h_{\alpha}]_t(p-x') dx' \right| & \\
 [|\phi|]_{t}(q-y)[|\phi|]_{t}(q-y') dydy'dpdq\frac{dt}{t}. &
\end{align*} 
Applying the Cauchy-Schwarz inequality   we obtain
\begin{align*}
|{\Lambda}_{h_\alpha,{\phi}}(F_1,F_2,F_2,F_1)| \leq {\Lambda}_{h_\alpha,|\phi|}(F_1,F_1,F_1,F_1)^{1/2}
{\Lambda}_{{h}_\alpha,|\phi|}(F_2,F_2,F_2,F_2)^{1/2}.
\end{align*}

Now we dominate the rapidly decaying function $|\phi|$ by a positive constant times $\Phi$, which gives for the first factor
\begin{align}\label{f2}
{\Lambda}_{h_\alpha,|\phi|}(F_1,F_1,F_1,F_1) \lesssim 
\int_1^\infty \int_1^\infty \Lambda_{h_\alpha,{h}_\alpha,g_\gamma,g_\delta}(F_1,F_1,F_1,F_1)\frac{d\gamma}{\gamma^4} \frac{d\delta}{\delta^4}.
\end{align}
By symmetry it again suffices to estimate
$$\int_1^\infty {\Lambda}_{h_\alpha,g_\gamma}(F_1,F_1,F_1,F_1) \frac{d\gamma}{\gamma^3}.$$
Lemma \ref{tel} with $(\rho_1,\sigma_1)=(g_\alpha,h_\alpha)$ and $(\rho_2,\sigma_2)=(g_\gamma,h_\gamma)$ gives
\begin{align*}
{\Lambda}_{h_\alpha,g_\gamma}(F_1,F_1,F_1,F_1) & =    - {\Lambda}_{g_\alpha,h_\gamma}(F_1,F_1,F_1,F_1) +  \int_{\R^2}F_1^4 .
\end{align*}

The key gain we obtain from having reduced to a single function $F_1$ is that 
\begin{align}\label{last-form}
 {\Lambda}_{g_\alpha,h_\gamma}(F_1,F_1,F_1,F_1)\geq 0, 
\end{align}
which can be seen by writing the form in \eqref{last-form} in an analogous way as in \eqref{pf-pos} and using positivity of $g_\alpha$. By our normalization, $\int_{\R^2}F_1^4 = 1$.
Thus, 
$${\Lambda}_{h_\alpha,g_\gamma}(F_1,F_1,F_1,F_1) \leq 1. $$ 
This establishes the desired estimate for   $\widetilde{\Lambda}_{\varphi^{(u)},\varphi^{(-u)},\psi^{(v)},\psi^{(-v)}}$.

\section{Appendix}
In this appendix we give the following remaining proof.
\begin{proof}[Proof of Lemma \ref{lemma:bump}] 
We construct a function $f$ which has the prescribed behavior near the endpoints of its support, so that the considered square roots are evidently smooth.
The construction  essentially  consists of algebraic manipulations of   $\varphi(x):=e^{-\frac{1}{x}}\mathbf{1}_{(0,\infty)}(x)$. 

Consider the function
\begin{align*}
g(x):=c\, \varphi_1(x)\varphi_2(2-\frac{2}{\varepsilon}x),
\end{align*}
where $\varphi_1$ and $\varphi_2$ are defined as
\begin{align*}
\varphi_1(x):= ((3-x)\varphi'(x))'\hspace{1cm}\mathrm{and}\hspace{1cm} \varphi_2(x):=\frac{\varphi(x)}{\varphi(x)+\varphi(1-x)}.
\end{align*}
The constant $c>0$ is chosen such that $\int_\R g =1$.
 The function $g$ is smooth, non-negative and supported on $[0,\varepsilon]$. Since $\varphi_2$ equals $1$ for $x\geq 1$, for $\delta:=\varepsilon/2$ we have
 \begin{align*}
 g  = c\, \varphi_1 \;\; \mathrm{on} \;\; (-\infty,\delta).
 \end{align*}
  The factor $(3-x)$ in the definition of $\varphi_1$   will be convenient when investigating \eqref{square-root}.
  
We consider the  antiderivative $$f(x):=\int_{-\infty}^x g(t-1)-g(3-t) dt,$$
which is smooth and even about $x=2$, i.e. $f(x)=f(4-x)$. Moreover, it is supported on $[1,3]$,  positive on $(1,3)$ and   constantly equals $1$ on $[1+\varepsilon,3-\varepsilon]$. We have
\begin{align}\label{eqn:f}
f(x)=c\, (4-x)\varphi'(x-1)\;\;\mathrm{on}\;\;(-\infty,1+\delta).
\end{align}
Thus, $f^{1/2}$ is smooth at $x=1$. Smoothness at $x=3$ follows by symmetry. 

Consider the integral in \eqref{square-root}, 
which is due to oddness of the integrand equal to 
\begin{align*}
h(x):=\int_{-\infty}^x - \frac{f(t)+f(-t)}{t} dt.
\end{align*}
The function $h$ is even, supported on $[-3,3]$ and  positive on $(-3,3)$. Using $f(-t)=f(t+4)$ and \eqref{eqn:f} we see that
\begin{align*}
h(x) = c\, \varphi(x+3)  \;\;\mathrm{on}\;\;(-\infty,-3+\delta).
\end{align*}
This shows smoothness of $h^{1/2}$ at $x=-3$.
By symmetry the same holds at $x=3$, which establishes the claim of the lemma.
\end{proof}


\end{document}